\newtheorem{theorem}{Theorem}[section]
\newtheorem{lemma}{Lemma}[section]
\title{An algorithm and computation to verify Legendre's Conjecture up to  $3.33\cdot10^{13}$}
\author{Jonathan Sorenson}
\author{Jonathan Webster}
\address[1]{Computer Science and Software Engineering Department, Butler University, Indianapolis, IN USA, \texttt{jsorenso@butler.edu}}
\address[2]{Mathematical Sciences Department, Butler University, Indianapolis, IN USA, \texttt{jewebste@butler.edu}}
\begin{document}

\maketitle

\begin{abstract}
We state a general purpose algorithm for quickly finding primes in evenly divided sub-intervals. Legendre's conjecture claims that for every positive integer $n$, there exists a prime between $n^2$ and $(n+1)^2$.  Oppermann's conjecture subsumes Legendre's conjecture by claiming there are primes between $n^2$ and $n(n+1)$ and also between $n(n+1)$ and $(n+1)^2$.  Using Cram\'er's  conjecture as the basis for a heuristic run-time analysis, we show that our algorithm can verify Oppermann's conjecture, and hence also Legendre's conjecture, for all $n\le N$ in time $O( N \log N \log \log N)$ and space $N^{O(1/\log \log N)}$.  We implemented a parallel version of our algorithm and improved the empirical verification of Oppermann's conjecture from the previous $N = 2\cdot 10^{9}$ up to $N = 3.33\cdot 10^{13}$, so we were finding $27$ digit primes.  The computation ran for about half a year on four Intel Xeon Phi $7210$ processors using a total of $256$ cores.
\end{abstract}

\section{Introduction and Motivation}

In his work on number theory, Adrien-Marie Legendre conjectured 
that for every positive integer $n$ there is always a prime between $n^2$ and $(n+1)^2$.  In 1882, the Danish mathematician Ludwig Oppermann strengthened the conjecture.  He posited the existence of two primes between each of $n^2$, $n(n+1)$, and $(n+1)^2$.   

While it is commonly believed that these conjectures are true, we remain far from proving them.  After all, Legendre's conjecture implies a gap between primes $p$ that is of size $O(\sqrt{p})$.  Even the Riemann hypothesis would only give $O(\sqrt{p} \log{p})$.  

Our goal is to create an algorithm to computationally verify Oppermann and Legendre's conjectures as far as possible.  The previous record for this is $n\le 2\times 10^9$, as a consequence of a massive computation that found all primes up to $4\times10^{18}$ due to \cite{SHP2014}.
We have extended this to $n\le 3\times 10^{13}$ as of this writing.


While the correctness of our algorithms will be proved unconditionally, we need conjectures even stronger than Legendre's conjecture to provide any meaningful asymptotic analysis of the algorithm.  We will rely on the probabilistic model that undergirds Cram\'er's conjecture.  Namely, that if you choose a random number less than $x$, then it is expected to be prime with probability $1/\log x$.  Throughout, we assume Oppermann's conjecture is true.  Of course it may not be true, and our algorithms will detect a counterexample interval if there is one below $N$, but for the purposes of asymptotic runtime analysis we assume this will not happen.  We present algorithms to verify an unproven conjecture; if the conjecture is false, no such algorithm can exist.

The organization of the paper is as follows.  
In Section 2, we give some background on prime gaps and state the heuristic model we will use for asymptotic analysis.  
Since we believe that this is the first computation specifically aimed at empirical verification of Legendre's conjecture, we use Section 3 to discuss three increasingly better but more complicated algorithms that provide context for our work.   
In Section 4, we state our algorithm and Section 5 provides the asymptotic analysis of the running time and space used by our algorithm.  
We conclude with some timing information, details of our computation, and comments about the implementation in the final section.   

\section{Prime Gaps}

We know from the prime number theorem that $\pi(x)$, the number of primes $\le x$, is asymptotically $x/\log x$.  If we were to choose an integer uniformly at random below $x$, then the probability it is prime is asymptotic to $1/\log x$ as a result, and further, we assume independence.  Of course, primes are not random, but using this idea to predict the distribution of primes under various conditions has proven to be very useful.
Indeed, Cram\'er conjectured that the maximum gap between consecutive primes near $n$ is $O((\log n)^2)$ based on this model, and as a result we refer to this as \textbf{Cram\'er's Model}.
The massive computation from \cite{SHP2014} has shown the model works well in practice.  See \cite{FGKT2016} for a discussion of the model and references to other work on prime gaps and related conjectures.  See also \cite[\S1.4]{CP}.


We state a couple simple but useful consequences.
\begin{lemma}\label{cramer1}
Assuming Cram\'er's model, the probability that all integers from a set of size $\log n \log v$ near $n$ are composite is at most $O(1/v)$, for large $n$. 
\end{lemma}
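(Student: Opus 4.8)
The plan is to read the statement directly off Cram\'er's model and then apply one elementary exponential estimate. In the model, each integer near $n$ is prime independently with probability $1/\log n$, hence composite with probability $1 - 1/\log n$. I would therefore begin by writing the event ``every integer in the set is composite'' as an intersection of independent events and using independence to factor its probability. For a set of cardinality $\log n \log v$ all of whose elements are near $n$ (so that each carries the common prime-probability $1/\log n$), this gives the probability $(1 - 1/\log n)^{\log n \log v}$.

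The key step is then the inequality $1 - x \le e^{-x}$, valid for all real $x$ and applied here with $x = 1/\log n$. This immediately yields
\[
\left(1 - \frac{1}{\log n}\right)^{\log n \log v} \le \exp\!\left(-\frac{\log n \log v}{\log n}\right) = \exp(-\log v) = \frac{1}{v},
\]
so the probability is at most $1/v$, which is certainly $O(1/v)$. The entire argument is thus a one-line product bound followed by a cancellation of the $\log n$ factors in the exponent.

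Because the computation is so short, I do not expect any real obstacle; the only care needed is bookkeeping, and this is what the ``for large $n$'' qualifier absorbs. Specifically, I would check three points. First, the cardinality $\log n \log v$ should be read with an implicit floor or ceiling, which changes the exponent by at most one and hence the bound by at most a bounded constant factor. Second, ``near $n$'' must be interpreted so that the set lies in an interval of length $o(n)$ about $n$; then $\log m = \log n\,(1 + o(1))$ uniformly over the set, so replacing each individual prime-probability by the common value $1/\log n$ only perturbs the exponent by a factor $1 + o(1)$, which is swallowed by the $O$-constant once $n$ is large. Third, the independence used in the factoring is exactly the independence postulated by Cram\'er's model, so it requires no separate justification. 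Assembling these remarks turns the displayed bound into the claimed $O(1/v)$ estimate.
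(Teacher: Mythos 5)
Your proposal is correct and follows essentially the same route as the paper: both factor the probability by independence into $\left(1-\frac{1}{\log n}\right)^{\log n \log v}$ and compare it to $e^{-\log v}=1/v$. Your use of the inequality $1-x\le e^{-x}$ in place of the paper's asymptotic $\sim$ is a minor (and if anything slightly cleaner) variation, and your bookkeeping remarks about the exponent and the meaning of ``near $n$'' only make explicit what the paper leaves implicit.
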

\begin{proof}
    Assuming independence, this probability is 
    $$
    \left( 1- \frac{1}{\log n}\right)^{\log n\log v}
    \quad\sim\quad e^{-\log v} \quad=\quad \frac{1}{v}.
    $$
\end{proof}
Setting $v=n$ gives Cram\'er's conjecture.

Let $M$ be a positive integer.
Next, we apply Cram\'er's model to primes in arithmetic progressions.
By Dirichlet's theorem \cite{apostol2} we know that, asymptotically, there are $x/(\phi(M)\log x)$ primes $\le x$ in each of the $\phi(M)$ residue classes $a$ modulo $M$ where $\gcd(a,M)=1$.  Here $\phi(M)$ is Euler's totient function.  This means an integer chosen uniformly at random from a fixed residue class modulo $M$, for example $1\bmod M$, is prime with probability $M/(\phi(M)\log x)$.
\begin{lemma}\label{cramer2}
For positive integers $n$, $b$, and $v$,
  let $M$ be a positive integer that is a multiple of all primes $p\le b$.
Assuming Cram\'er's model, the probability that $\log n \log v / \log b$ integers near $n$ that are all relatively prime to $M$ are all composite is at most $O(1/v)$, for large $n$ and $b$ tending to infinity as a function of $n$.
\end{lemma}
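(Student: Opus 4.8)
The plan is to mirror the proof of Lemma \ref{cramer1}, the only difference being that the single-trial probability of primality is boosted by the sieving condition $\gcd(\,\cdot\,,M)=1$. First I would pin down the probability that one of the integers under consideration is prime. An integer near $n$ that is relatively prime to $M$ lies in one of the $\phi(M)$ residue classes $a \bmod M$ with $\gcd(a,M)=1$, and by the discussion immediately preceding the lemma each such class contains primes with density $M/(\phi(M)\log n)$. Hence, under Cram\'er's model, each of the integers in question is prime with probability $p = M/(\phi(M)\log n)$, and we assume these events are independent.

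The next step is to control the boost factor $M/\phi(M)$. Since $M$ is divisible by every prime $p \le b$, the set of primes dividing $M$ contains all primes up to $b$, so
$$\frac{\phi(M)}{M} = \prod_{p \mid M}\left(1 - \frac{1}{p}\right) \le \prod_{p \le b}\left(1 - \frac{1}{p}\right).$$
By Mertens' third theorem the right-hand side is asymptotic to $e^{-\gamma}/\log b$, so for $b$ large enough (recall $b \to \infty$ as a function of $n$) we obtain $M/\phi(M) \ge \log b$, using that $e^{\gamma} > 1$. Consequently $p \ge \log b / \log n$.

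Finally I would assemble the estimate exactly as in Lemma \ref{cramer1}. Writing $k = \log n \log v / \log b$ for the number of integers and invoking independence, the probability that all are composite is $(1-p)^k$, and the bound $1 - p \le e^{-p}$ gives
$$(1-p)^k \;\le\; e^{-pk} \;\le\; \exp\!\left(-\frac{\log b}{\log n}\cdot\frac{\log n \log v}{\log b}\right) \;=\; e^{-\log v} \;=\; \frac{1}{v}.$$

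I expect the crux to be the second step, and in particular seeing why the answer comes out to $O(1/v)$ rather than something weaker: the factor $\log b$ in the \emph{denominator} of the count $\log n \log v/\log b$ is precisely cancelled by the factor $\log b$ gained in the prime density through sieving, so the exponent collapses to $\log v$. In carrying this out I would be careful about two points, both of which happen to work in our favour: any extra prime factors of $M$ beyond those up to $b$ only \emph{increase} $M/\phi(M)$, so the inequality on $\phi(M)/M$ points the right way; and Mertens supplies the constant $e^{\gamma} > 1$, which guarantees the exponent is at least $1$ and hence yields a genuine $1/v$ bound.
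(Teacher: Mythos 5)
Your proof is correct and takes essentially the same approach as the paper's: both bound $M/\phi(M)$ from below by the product of $p/(p-1)$ over the primes $p\le b$, invoke Mertens's theorem to gain a factor asymptotic to $e^\gamma\log b$ in the single-trial prime probability, and then rerun the computation of Lemma \ref{cramer1}. The only difference is cosmetic: you discard the constant $e^\gamma>1$ early to land exactly on $1/v$, while the paper carries it implicitly and obtains $v^{-e^\gamma}=O(1/v)$.
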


\begin{proof}
WLOG we can assume all the integers are in the same residue class modulo $M$.
    Let $m=\prod_{p\le b} p$, which divides $M$.
    Then we have 
    \begin{eqnarray*}
    \frac{M}{\phi(M)}\frac{1}{\log n}& \ge& \frac{m}{\phi(m)}\frac{1}{\log n}\\
    &=& \frac{1}{\log n}\prod_{p\le b} \frac{p}{p-1} \\
    &=& \frac{1}{\log n}\left(\prod_{p\le b} 1-\frac{1}{p}\right)^{-1} \\
    &\sim& \frac{1}{\log n} e^\gamma \log b
    \end{eqnarray*}
    by Mertens's theorem \cite{HW}.
    Following the lines of the previous lemma, the result follows.
\end{proof}

\section{Three Algorithms}

Of course we can do better than finding all primes up to $N^2$, which is what was done in \cite{SHP2014}.  In this section we outline three increasingly better, but increasingly more complicated, ways to do this that illustrate the design choices for our new algorithm, which is described in detail in the next section.

\subsection{Algorithm A}
One way to verify Oppermann's conjecture for all $n\le N$
would be, for each $n$, to
test $n^2+1, n^2+2, \ldots$ for primality until we find a prime.
Then test $n(n+1)+1, n(n+1)+2, \ldots$ and do the same.
We can employ some of the techniques outlined in \cite{Sorenson06} to make this efficient:
\begin{itemize}
\item We can do a small bit of trial division and use base-2 strong pseudoprime tests to quickly discard composites.
We can easily do this so that the average time spent on each composite is at most $O(\log n)$.
\item We need an unconditional, fast proof of primality.  If we are willing to assume the ERH for runtime analysis (not correctness) the pseudosquares prime test \cite{LPW96} takes $O((\log n)^3)$ arithmetic operations.  Otherwise we can use AKS \cite{AKS04,Bernstein2004}, in $O((\log n)^4)$ time.
\end{itemize}
By Cram\'er's model, we expect to find each desired prime after ruling out $O(\log n)$ composites on average.  Since we are finding $2N$ primes, the overall runtime of Algorithm A is $O(N(\log N)^3)$ under the ERH and $O( N(\log N)^4)$ without. 

\begin{lemma}\label{lemma:A}
  Assuming Cram\'er's model, Oppermann's conjecture, and the ERH, Algorithm A can verify that Opperman's conjecture is true for $n\le N$ in $O(N(\log N)^3)$ expected arithmetic operations.  This assumes a suitable table of pseudosquares has been precomputed.  The ERH is used only for the running time, not for correctness.
\end{lemma}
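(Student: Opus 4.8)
The plan is to bound the expected cost of processing one interval and then sum over the $2N$ intervals, two for each $n\le N$. Fix $n$ and consider the interval $[n^2,n(n+1)]$; the argument for $[n(n+1),(n+1)^2]$ is identical. Let $X$ be the number of integers the inner loop inspects before halting at a prime. Oppermann's conjecture guarantees that such a prime exists, so $X$ is finite and the loop terminates; this is what makes correctness unconditional. To bound $E[X]$ I would feed the tail estimate of Lemma~\ref{cramer1} into the identity $E[X]=\sum_{t\ge 0}\Pr[X>t]$. The event $X>\log n\log v$ is exactly the event that the first $\log n\log v$ candidates are all composite, which Lemma~\ref{cramer1} bounds by $O(1/v)$; writing $t=\log n\log v$ gives $\Pr[X>t]=O(e^{-t/\log n})$, and summing this geometric-type tail yields $E[X]=O(\log n)$. (Equivalently, under Cram\'er's model each candidate near $n^2$ is prime independently with probability $\sim 1/\log(n^2)$, so $X$ is geometric with mean $O(\log n)$.)

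Next I would charge the arithmetic. Each of the $X$ candidates is screened by a little trial division followed by a single base-$2$ strong pseudoprime test, which, as noted in the description of Algorithm~A, costs $O(\log n)$ operations on average; this contributes $E[X]\cdot O(\log n)=O((\log n)^2)$ in expectation. The single surviving candidate is then certified by the pseudosquares test, costing $O((\log n)^3)$ operations under the ERH. The expected cost per interval is therefore $O((\log n)^2)+O((\log n)^3)=O((\log n)^3)$, and summing over the $2N$ intervals with $n\le N$ gives $2N\cdot O((\log N)^3)=O(N(\log N)^3)$, as claimed.

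Two points require care. The first is a matter of bookkeeping: the ERH enters only through the runtime of the pseudosquares test, whereas correctness depends solely on that test being a genuine primality proof together with Oppermann's conjecture supplying a prime to find, and Cram\'er's model is invoked only to bound the expectation. The second, which I expect to be the main obstacle, is that a composite could survive the base-$2$ screen --- a base-$2$ strong pseudoprime --- and so trigger a failing pseudosquares test at cost $O((\log n)^3)$. I would argue that this contributes negligibly: such pseudoprimes are sparse enough that the expected number encountered while scanning an interval of length $O(n)$ is $o(1)$, so they do not inflate the $O((\log n)^3)$ per-interval bound.
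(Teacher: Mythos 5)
Your proposal is correct and follows essentially the same route as the paper, which proves Lemma~\ref{lemma:A} by exactly this accounting: an expected $O(\log n)$ candidates per interval under Cram\'er's model, $O(\log n)$ average cost to dismiss each composite via trial division and a base-$2$ strong pseudoprime test, one $O((\log n)^3)$ pseudosquares certification (ERH for runtime only) per prime found, summed over $2N$ intervals. Your closing remark about base-$2$ strong pseudoprimes triggering spurious pseudosquares tests addresses a point the paper silently absorbs into its ``average time per composite'' claim; it is a welcome refinement but not a different argument.
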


\subsection{Algorithm B}

How much better can we do than an $O(N(\log N)^3)$ running time?
The Atkin-Bernstein prime sieve \cite{AB2004} can find all primes up to $N$ in time $O(N/\log\log N)$ with the use of a wheel, which is only $\log N/\log\log N$ time per prime.  
This implies a heuristic lower bound of $N\log N/\log\log N$ to find $N$ primes.  Because the primes we want to find are spread out, it makes sense to sieve an arithmetic progression.  
Say we choose a modulus $M$ near $N/4\log N$, so that the residue class $1\bmod M$ has about $N^2/M \approx 4N\log N$ integers up to $N^2$.
Applying Cram\'er's model to this arithmetic progression implies we expect to find that around $2N$ of these numbers are prime, since $\log (N^2)=2\log N$.
Now the Atkin-Bernstein sieve takes linear time on an arithmetic progression, since it is not generally possible to deploy a wheel, but by embedding small primes in $M$ we can keep the $\log\log N$ time improvement, thereby matching the desired time bound.

Algorithm B has two major drawbacks.
First, it uses space linear in $N$.  Because we are finding primes in an arithmetic progression, we cannot employ Galway's space improvement \cite{Galway2000}, and so the necessary space will be the square root of the upper bound, hence $N$.  Note that Helfgott's sieve has the same drawback \cite{Helfgott2020}.
Second, although Cram\'er's model tells us that we can expect to find a prime among $2\log n$ integers of size $n^2$, there is no guarantee.  To get the probability of failing to find a prime in such a set down to $1/N$ or less means we need to sample $\gg(\log N)^2$ integers by Lemma \ref{lemma:A}.  Remembering that we put small primes in $M$ can lower this to $(\log N)^2/\log\log N$ by Lemma \ref{lemma:B}, but we now need an overall running time of $O( N(\log N)^2/\log\log N)$.

\begin{lemma}\label{lemma:B}
Assuming Cram\'er's model and Oppermann's conjecture, Algorithm B can verify that Oppermann's conjecture is true for all $n\le N$ with probability $1-o(1)$ in time $O( N(\log N)^2/\log\log N)$ using $O(N)$ space.
\end{lemma}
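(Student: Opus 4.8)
The plan is to treat the verification as three tasks --- sieve a carefully chosen arithmetic progression, test each Oppermann half-interval against the resulting list of primes, and bound the chance that any half-interval comes up empty --- and the one design decision that drives everything is the choice of modulus. I would embed in $M$ all primes up to a bound $b$ with $b \sim \log N$, so that $M/\phi(M) \sim e^\gamma \log b$ and $\log b \sim \log\log N$ by Mertens's theorem; every integer in the class $1 \bmod M$ is then automatically coprime to all primes $\le b$, which is exactly the wheel that buys the $\log\log N$ factor. Crucially, I would not use a single modulus across all of $[1,N]$: the half-intervals near $n^2$ have length $\approx n$, so a fixed spacing dense enough to drop sufficiently many samples into the short intervals (small $n$) would grossly oversample the long ones (large $n$) and push the running time past $N(\log N)^2$. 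Instead I would split the range dyadically, $n \in [2^k, 2^{k+1})$, and in block $k$ use a modulus $M_k$ (still a multiple of the primes $\le b$) small enough that each half-interval in the block contains at least the $\log(n^2)\log v/\log b$ integers of the fixed residue class demanded by Lemma \ref{cramer2}; for the top block this threshold is $\Theta((\log N)^2/\log\log N)$. The few genuinely small $n$ (those with $2^k = O((\log N)^2/\log\log N)$) I would dispatch by a dense sieve up to $O((\log N)^4)$, at negligible cost.

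With the moduli fixed, correctness is unconditional and immediate: the Atkin--Bernstein sieve \cite{AB2004} run on the residue class modulo $M_k$ over $[4^k, 4^{k+1})$ flags exactly the primes in that class, and I would certify a half-interval precisely when it contains a flagged prime, so that no primality test and no ERH are needed. The probabilistic content is the claim that, with probability $1-o(1)$, no half-interval is missed. Here I would invoke Lemma \ref{cramer2} directly: each half-interval near $n^2$ supplies at least $\log(n^2)\log v/\log b$ coprime samples, so taking $v = N\log N$ makes the probability that all of them are composite at most $O(1/v) = O(1/(N\log N))$. A union bound over the $2N$ half-intervals then gives total failure probability $O(1/\log N) = o(1)$, as required.

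It remains to account for time and space. The Atkin--Bernstein sieve is linear in the number of candidates it processes, which in block $k$ is $\Theta\!\left((4^{k+1}-4^k)/M_k\right) = \Theta(2^k)$ up to the polylogarithmic per-interval threshold; since that threshold grows only polylogarithmically, the sum over $k \le \log_2 N$ is a geometric series dominated by its top term and totals $O(N(\log N)^2/\log\log N)$, while the per-interval scanning adds only work of the same order. For space, the largest segment reaches $4^{k+1} \le 4N^2$, so the sieve needs the primes up to $\sqrt{N^2}=N$ together with its working buffers, i.e.\ $O(N)$. I expect the dyadic decomposition to be the main obstacle, not the estimates themselves: the Cram\'er and union-bound steps are routine once Lemma \ref{cramer2} is in hand, but reconciling the per-interval sample requirement (which scales with the interval length $\approx n$) against the global candidate budget $O(N(\log N)^2/\log\log N)$ is exactly what forces the block structure and the delicate interplay between the wheel embedded in $M$ and the sieve's $\log\log N$ speedup.
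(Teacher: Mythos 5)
Your core approach is the paper's: sieve the residue class $1\bmod M$ with small primes embedded in $M$ as a wheel, use Lemma~\ref{cramer2} (with Mertens supplying the $\log\log N$ savings) to bound the per-interval failure probability, take a union bound over the $2N$ half-intervals, and accept $O(N)$ space because sieving an arithmetic progression up to $N^2$ requires the primes up to $N$ (no Galway-style space reduction). Your dyadic decomposition is an extra layer of care that the paper skips --- the paper works with a single modulus, which silently leaves the intervals with $n \ll M$ without any candidates --- so that refinement is a welcome strengthening rather than a deviation.

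However, one parameter choice, as written, breaks your construction. You require every $M_k$ to be a multiple of all primes $p \le b$ with $b \sim \log N$. By the prime number theorem, $\prod_{p\le b} p = e^{(1+o(1))b} = N^{1+o(1)}$, so every $M_k$ would have size at least $N^{1+o(1)}$; but your blocks need $M_k \approx 2^k/\mathrm{polylog}(N)$, which is $\ll N$ for every block below the top one (and borderline even there). Consequently, in all those blocks the class $1 \bmod M_k$ contains no integers whatsoever in $[4^k, 4^{k+1})$, and the Lemma~\ref{cramer2} and union-bound steps have nothing to apply to. The repair is easy and standard: take $b$ polynomially smaller than $\log N$, say $b = \log N/(\log\log N)^2$ or even $b = \sqrt{\log N}$. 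Then the wheel has size $N^{o(1)}$, so it fits inside every $M_k$ down to blocks of size $N^{o(1)}$ (below which your dense-sieve fallback covers everything at negligible cost --- you should raise its cutoff from $O((\log N)^4)$ to this $N^{o(1)}$ level), and Mertens still gives $M_k/\phi(M_k) = \Theta(\log\log N)$ because $\log b = \Theta(\log\log N)$, which is all the runtime bound needs. With that one change your argument goes through and is, if anything, more complete than the paper's own sketch, which never pins down $b$ and never addresses small $n$ at all.
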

To get the space use down to $N^c$ for some $c<1$ will incur a higher runtime cost.  This seems to be true even if we use a variant of the sieve of Eratosthenes and choose to allow sieving by non-primes.
See, for example, \cite{Sorenson98}.

\subsection{Algorithm C}
Linear space use is not practical if we want to extend significantly beyond $N=2\times 10^9$.
Our third method is to take Algorithm A, but apply it to an arithmetic progression like we used in Algorithm B but without the sieve to save space.
We choose a prime $R$, with $R>(N^2)^{1/3}=N^{2/3}$ and set our modulus $M=Rm$, where $m$ is composed of small primes such that $M$ is roughly $N/(\log N)^2$ in size.
If we set $a\equiv n^2 \bmod M$, then we would check $n^2+(M-a)+1, n^2+(2M-a)+1,\ldots$ for a prime to search the arithmetic progression $1\bmod M$.
The benefit of this is that we can use the Brillhart-Lehmer-Selfridge (BLS) prime test (see \cite[Theorem 4.1.5]{CP}) with the prime $R$ dividing $\ell-1$ each integer $\ell$ we test for primality; this specialized test is very fast with a running time of $O(\log n)$ arithmetic operations.
If $m$ contains all primes up to, say, $0.2\log n$, the probability of an integer that is $1\bmod M$ being prime is then roughly $\log\log n/(2\log n)$ by Lemma \ref{cramer2}.  This implies an overall running time of $O(N (\log N)^2/\log\log N)$, matching Algorithm B's runtime while using a small fraction of the space.
\begin{lemma}\label{lemma:C}
Assuming Cram\'er's model and Oppermann's conjecture, Algorithm C can verify that Oppermann's conjecture is true for $n\le N$ with probability $1-o(1)$ in time $O( N(\log N)^2/\log\log N)$ using $O(\log N)$ space.
\end{lemma}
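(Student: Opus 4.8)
The plan is to establish the three assertions of Lemma \ref{lemma:C} in turn: that each primality verdict is unconditionally correct, that the running time is $O(N(\log N)^2/\log\log N)$ under Cramér's model and Oppermann, and that the working space is $O(\log N)$, with the $1-o(1)$ success probability coming from a union bound over all intervals. The skeleton mirrors the analysis behind Lemma \ref{lemma:B}, the new ingredients being the Brillhart–Lehmer–Selfridge (BLS) test in place of pseudosquares (removing the ERH) and the absence of a sieve (cutting the space to $O(\log N)$).

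First I would dispatch correctness, which is exactly what the arithmetic progression buys. Every candidate $\ell$ tested satisfies $\ell \equiv 1 \pmod{M}$ with $M = Rm$, so $R \mid \ell - 1$ for the known prime $R$. Since $\ell < (N+1)^2$ and $R > N^{2/3}$, we have $R > \ell^{1/3}$, so the fully factored part $R$ of $\ell-1$ exceeds $\ell^{1/3}$ and the hypotheses of the BLS test \cite[Theorem 4.1.5]{CP} are met; hence each prime verdict is an unconditional proof, with no appeal to the ERH. One such test costs $O(\log n)$ arithmetic operations, and a single base-$2$ strong pseudoprime test rejects a composite candidate in the same $O(\log n)$ time. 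Because $m$ already contains every prime $\le b := 0.2\log n$, no candidate is divisible by a small prime, so essentially all composites are caught by the pseudoprime test and the per-candidate cost is $O(\log n)$.

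Next I would count candidates. For a fixed class modulo $M$, Lemma \ref{cramer2} with $b = 0.2\log n$ gives a prime density near $n^2$ of order $\frac{M}{\phi(M)}\cdot\frac{1}{2\log n} \sim \frac{e^{\gamma}\log\log n}{2\log n}$ by Mertens's theorem \cite{HW}, so the expected number of candidates inspected before a prime appears is $O(\log n/\log\log n)$. Multiplying the $O(\log n)$ per-candidate cost by these $O(\log N/\log\log N)$ candidates for each of the $2N$ required primes gives $O\!\left(N\cdot\frac{\log N}{\log\log N}\cdot\log N\right) = O\!\left(\frac{N(\log N)^2}{\log\log N}\right)$, as claimed. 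For space, the algorithm stores only $O(1)$ integers of size $O(N^2)$ — the current candidate and the constants $M$, $R$, $a$ — together with the table of primes up to $b = O(\log N)$ used to build $m$; each occupies $O(\log N)$ bits, so the total is $O(\log N)$.

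The success probability is where I expect the main obstacle to lie. The delicacy is that a found prime only witnesses Oppermann if it lands inside the correct sub-interval $(n^2,n(n+1))$ or $(n(n+1),(n+1)^2)$, of length only $\approx n$; since candidates are spaced $M$ apart, an interval holds about $n/M$ of them, which must be large enough that one is prime with high probability. I would fix $M$ (through $m$) so that each interval contains $\asymp (\log n)^2$ candidates — consistent with $M \approx N/(\log N)^2$ when $n\asymp N$ — and then apply Lemma \ref{cramer2} with $\log n\log v/\log b \asymp (\log n)^2$, i.e. $\log v \asymp \log n\log\log n$, so the chance that a given interval contains no prime in the progression is $O(1/v) = n^{-\Omega(\log\log n)}$. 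A union bound over the $O(N)$ intervals makes the overall failure probability $o(1)$, and on the complementary event the algorithm finds all $2N$ primes within the stated time and space. The residual difficulty, which I would absorb into a base case, is that for small $n$ the interval is shorter than $M$ and holds too few candidates; these are handled with a correspondingly smaller modulus (equivalently, recursing with $N$ replaced by $N/2$), whose time contribution forms a convergent geometric series and leaves the bound unchanged.
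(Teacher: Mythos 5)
Your proposal is correct and takes essentially the same route as the paper: the same modulus $M=Rm$ with a precomputed prime $R>N^{2/3}$ and $m$ built from the primes up to $0.2\log n$, the BLS test giving unconditional $O(\log n)$-time primality proofs, and Lemma \ref{cramer2} giving the $\sim\log\log n/(2\log n)$ density that yields $O(N(\log N)^2/\log\log N)$ total time with polylogarithmic space. Your extra details (the explicit union bound for the $1-o(1)$ probability and the small-$n$ base case) merely flesh out steps the paper leaves implicit.
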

In the next section, we describe the final version of our algorithm, which utilizes many of the ideas described in this section.  In particular, we combine Algorithm C with limited sieving to achieve an $O(N\log N \log\log N)$ running time, a mere $(\log\log N)^2$ factor off our heuristic lower bound.

\section{Our approach}

The next new idea is to sieve in small batches.  We choose a parameter $t$ and in one batch, or \textit{segment} as we call it, sieve the arithmetic progression $1\bmod M$ over the range $n^2$ to $(n+t)^2$, hoping to find $2t$ roughly evenly-spaced primes.  We refer to the portion of the segment where we find one prime, either between $(n+i)^2$ and $(n+i)(n+i+1)$ or between this and $(n+i+1)^2$, as an \textit{interval}.  Thus each segment contains $2t$ intervals.

We now give the main steps of the algorithm, with comments to follow.
The algorithm accepts starting values $n$ and $t$, and as we said before, it will verify Oppermann's conjecture over the segment $n^2$ to $(n+t)^2$, thereby finding $2t$ primes.  This is then continued by updating $n:=n+t$ and repeating the process until everything is verified up to $N$.  In the next section, we will pin down a value to use for $t$ to optimize the asymptotic running time, but it will be no larger than a small fractional power of $N$.  We assume here that $t=o(n)$.
\begin{enumerate}
\item \textbf{Set parameters.}
 Let $s$ denote the minimum number of values to check in a given interval.  
 We choose $s$ so that, by Cram\'er's model, one of these candidates is highly likely to be prime.  
 For now, it is safe to assume that $s=\log(n^2)=2\log n$.

  From a precomputed list of primes, choose a prime $R$ with
  $R>(n+t)^{2/3}$ so that $R$ can be used to prove primality for
  potential primes in the current segment that are $1 \bmod R$ using the Brillhart-Lehmer-Selfridge (BLS) prime test mentioned above.
  We then construct a modulus $M$ where $M=Rm$ with $m$ composed of small primes, with $2\mid m$.  We want to find $2t$ primes, so we need
  \[   2ts \approx \frac{(n+t)^2-n^2}{M} \]
  or, after some simplifications, $M\approx (n+t/2)/s$.  This means
  $m\approx (n+t/2)/(Rs)$.  So set $m=2\cdot 3\cdot 5\cdots$ until
  $m$ is the correct size.

\item \textbf{Set up bit vector for sieving.}
  Let $q:=\lceil n^2/M \rceil$ so that $Mq+1$ is the smallest integer
  larger than $n^2$ that is $1\bmod M$.
  Create a bit vector $x$ of length roughly $2ts$, initialized to all zeros,
  where the $i$th bit position in $x$ represents whether $M(q+i)+1$ is \textit{not} prime; so putting all zeros means we assume all of these integers are potentially prime to begin with.

\item \textbf{Sieve by small primes.}
  We sieve $x$ by all primes $p\le B$, with $\gcd(p,M)=1$, where $B$ is determined in the next section.  We want $B$ to be of roughly the same magnitude as the length of $x$, or $2ts$.  
  So $B=o(N)$.

  For each such prime $p$, we compute the first bit position $j$ such
  that $M(q+j)+1$ is divisible by $p$, and then set that bit position and every $p$th bit position after that in $x$, as is done in the sieve of Eratosthenes for arithmetic progressions.

\item \textbf{Look for primes.}
  We repeat this next step $2t$ times, once for each interval: there are two cases (A and B) and we run this for $i=0\ldots (t-1)$ on both cases.

  In case (A), we are looking for a prime between $(n+i)^2$ and $(n+i)(n+i+1)$, and in case (B) we are looking between $(n+i)(n+i+1)$ and $(n+i+1)^2$.  We will describe case (A); case (B) is similar.

  Compute the value of $q_i$, the smallest integer such that $M(q+q_i)+1>(n+i)^2$.  Then search for the first $j$, $j=q_i, q_i+1, q_i+2, \ldots$ so that bit position $j$ in $x$ is still a zero.  Then test
  $M(q+j)+1$ for primality using the BLS test.  If it is prime, we are done with this interval.  If not, continue trying larger values of $j$ until either a prime is found or $M(q+j)+1$ reaches $(n+i)(n+i+1)$.

  We expect to find a prime, but with some small frequency we will fail.  In this case, we have up to 4 other primes $R$ to try, in the style of Algorithm C.  Since sieving is not used, we would resort to some trial division instead.  If this all fails to find a prime (that is, if we exhaust our precomputed list of primes $R$), then we run Algorithm A on the interval.  In practice, this was never necessary.

\end{enumerate}

With the details of our algorithm set, we optimize for $t$, $s$, and $B$ in the next section.

\section{An analysis}

In this section we give the overall heuristic running time of our algorithm from the previous section.  We reiterate that this analysis is heuristic; we assume Cram\'er's model, and we assume Oppermann's conjecture is true.  Also note that the algorithm we analyze is not exactly the same as the code we wrote.  For example, in our code we fixed $s=128$, but below we will choose $s=\log (n+t)$ for the purposes of analysis.

We also make the implicit assumption below that $n\ge \sqrt{N}$, say, so that $n$ is large and we can write $\log(n+t)=O(\log n)$ unambiguously.  Of course the algorithm, as we coded it, works in practice for all $n\le N$.

Our model of computation is an algebraic RAM.  We assume basic arithmetic operations take constant time.

To verify all $n\le N$, our algorithm steps from the previous section are repeated $N/t$ times, once for each segment.  Let us now give the running time for one segment, broken down by the four main steps.

\begin{enumerate}
    \item 
    We set $s$ to the smallest integer larger than $\log (n+t)$.  With $R\approx (n+t)^{2/3}$, we have that $m$ can be as large as $n^{1/3}/\log n$ asymptotically.
    This is large enough that the largest prime in $m$ is $\gg \log (n+t)$.

    The time for this step is dominated by the cost of constructing $m$, since we assume the prime $R$ is precomputed.  Since the largest prime in $m$ is $O(\log n)$, this cost is bounded by finding the primes up to $O(\log n)$, which is $O(\log n\log\log\log n)$ using the sieve of Eratosthenes.
    \item 
    The bit vector $x$ has length $2ts=O(t\log n)$, and the running time for this step is linear in the length of $x$.
    Although $x$ can be reused for different segments, it needs to be cleared to all zeroes each time.
    \item 
    We can find the primes up to $B$ outside the loop over segments so that we pay this cost only once, but asymptotically it does not matter. The cost of sieving is
    \begin{eqnarray*}
        &&  \pi(B) + \sum_{p\le B, \gcd(p,m)=1} \frac{2ts}{p} \\
        &\le& \pi(B) + \sum_{(\log n)/4 < p \le B}\frac{2ts}{p} \\
        &\sim& \pi(B) + (\log\log B - \log\log\log n)(2ts).
    \end{eqnarray*}
    If we assume that $(\log n)/B=o(1)$, this is asymptotically at most
    $B/\log B + 2t\log n\log\log B < B/\log B + 2t\log n\log\log n$.

   Next, we set $B=2ts$, the same as the size of the bit vector $x$,
   so that the running time of this step is $O(t\log n\log\log n)$.

   \item 
   For the running time of this step, we need to first determine the cost of a single BLS prime test.  This cost is a constant
   number of modular exponentiations, or $O(\log n)$, since we can do modular multiplications in constant time (in our computational model).

   We must also calculate the time taken to scan the bit vector $x$ to find zero bits to construct numbers to test for primality.  This is bounded by the number of bits in $x$, or $O(t\log n)$.

   We are searching for exactly $2t$ primes, so the cost of successful prime tests is then bounded by $O(t\log n)$.

   This brings us two questions: \\
   (1) How many failed prime tests are there? and \\
   (2) What guarantee do we have that there will be a successful prime test in each interval?
\end{enumerate}
We will now address these two questions from step 4.

When the algorithm encounters a zero bit in $x$ and then performs a
prime test, under Cram\'er's model, the chance that number is actually
prime is conditional on knowing that it is free of prime divisors
below $B$.  By Lemma \ref{cramer2}, this is $O(\log B/\log n)$
Thus, we would expect to have to test $\log n/\log B$ numbers represented
by zero bits in $x$ before finding the prime we seek.  This answers the first question.

To answer the second question, we simply need to know the chance there is a prime in the interval.
Since $s\sim\log n$, Lemma \ref{cramer2} applies with $v=b$, the bound on primes in $m$.  The probability there are no primes in the interval is $O(1/\log n)$. 

The cost of the last step is expected to be
$O( t \log n (\log n/\log B))$ arithmetic operations, assuming a prime is found.

To match the cost of the sieving step, $O(t\log n \log\log n)$,
we need $\log\log n \gg \log n/\log B$, 
so we set $B=N^{c/\log\log N}$ for a positive constant $c$ like $c=1$.  We can now set $t=B/(2s)$.

With probability $O(1/\log n)$ we failed to find a prime in the interval, but then we use Algorithm C for an expected running time of $O(t \log n/\log\log n)$, which is negligible.  We leave it to the reader to show that the chance Algorithm C fails is exponentially rare making the expected cost of a use of Algorithm A negligible as well.

We multiply the cost of one segment by $N/t$ to obtain the overall running time of $O(N\log N \log\log N)$ using $N^{O(1/\log\log N)}$ space.

\begin{theorem}
Under the assumption of Cram\'er's model for the runtime analysis only, if Opperman's conjecture is true, our algorithm from Section 4 will verify Opperman's conjecture for all $n\le N$ using $O(N\log N\log\log N)$ arithmetic operations and at most $N^{O(1/\log\log N)}$ space.
\end{theorem}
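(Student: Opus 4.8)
The plan is to bound the cost of processing a single segment across the four algorithm steps, choose the parameters $s$, $B$, and $t$ to balance the dominant contributions, and then multiply by the number of segments $N/t$. Throughout I would use the standing assumption $n\ge\sqrt{N}$, so that $\log(n+t)=O(\log n)$, and fix $s$ to be the least integer exceeding $\log(n+t)$, so that $s\sim\log n$. The space bound will come for free once the sizes of the persistent data structures are identified.

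First I would dispatch the cheap steps. Step 1 is dominated by building $m$ as a product of consecutive small primes; since $m$ may be taken as large as $n^{1/3}/\log n$, its largest prime factor is $O(\log n)$, and a sieve of Eratosthenes gives cost $O(\log n\log\log\log n)$. Step 2 and the bit-scanning portion of step 4 are each linear in the length of $x$, namely $2ts=O(t\log n)$, and the $2t$ successful BLS tests cost $O(\log n)$ apiece, contributing a further $O(t\log n)$. The two genuinely delicate contributions are the small-prime sieve of step 3 and the failed prime tests of step 4. For step 3, summing $2ts/p$ over primes $p\le B$ coprime to $m$ and invoking Mertens's theorem exactly as in the proof of Lemma \ref{cramer2} gives $\pi(B)+(\log\log B-\log\log\log n)(2ts)$; setting $B=2ts$ collapses this to $O(t\log n\log\log n)$. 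For the failed tests I would apply Lemma \ref{cramer2} to conclude that a candidate surviving the sieve is prime with conditional probability $O(\log B/\log n)$, so one expects $O(\log n/\log B)$ tests per interval and a total step-4 cost of $O(t\log n\cdot\log n/\log B)$.

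Balancing step 4 against step 3 requires $\log n/\log B\ll\log\log n$, and this is precisely what forces the choice $B=N^{c/\log\log N}$; after that $t=B/(2s)$ is determined and the whole segment costs $O(t\log n\log\log n)$. The main obstacle, and the part I would treat most carefully, is controlling the failure modes rather than the routine sum estimates. Since $s\sim\log n$, Lemma \ref{cramer2} with $v$ equal to the prime bound $b$ appearing in $m$ gives probability $O(1/\log n)$ that an interval contains no prime among its sampled candidates. When this occurs the algorithm falls back to Algorithm C, whose expected cost is $O(t\log n/\log\log n)$ by Lemma \ref{lemma:C}; weighting this by the $O(1/\log n)$ failure probability makes its amortized contribution negligible. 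One must additionally verify that the probability Algorithm C itself fails, triggering the Algorithm A fallback of Lemma \ref{lemma:A}, is exponentially small, so that the exhaustive search never perturbs the bound.

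Multiplying the per-segment cost $O(t\log n\log\log n)$ by the $N/t$ segments then yields the claimed running time of $O(N\log N\log\log N)$. For the space bound I would observe that the only structures growing with $N$ are the bit vector $x$ and the list of sieving primes up to $B$, both of size $B=N^{O(1/\log\log N)}$; the handful of precomputed primes $R$ near $N^{2/3}$ that are tried per segment contribute only $O(\log N)$ each, so the total space is $N^{O(1/\log\log N)}$, completing the estimate.
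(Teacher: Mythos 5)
Your proposal is correct and follows essentially the same route as the paper's own analysis: the same per-step cost accounting (with the Mertens-type sieve bound and the $O(\log B/\log n)$ conditional primality probability from Lemma \ref{cramer2}), the same parameter balancing forcing $B=N^{c/\log\log N}$ and $t=B/(2s)$, the same treatment of the Algorithm C and Algorithm A fallbacks, and the same final multiplication by $N/t$. Your explicit identification of the bit vector and the sieving-prime list as the dominant space consumers is slightly more detailed than the paper's one-line space claim, but the argument is the same.
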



\section{Implementation Details}

We conclude with a discussion of some of our implementation details and comments on our computation.

For each of the four main steps of the algorithm, we have a few comments.
\begin{enumerate}
\item 
In practice, we used $s=128$.  We adapted the value of $t$ each segment; our bit vector $x$ was of a fixed size, so we chose $M$ so that $t$ would
fall between $256$ and $512$.  The average value of $t$ was around $450$.
This all implies that the length of the bit vector $x$ and the value of $B$ were fixed at $2^{17}=131072$, which easily fit in cache.  We used the C++ \texttt{bitset} class for $x$ and the masks (see below).

We used a Sage script to precompute a 2-dimensional list of integers that are provably prime to serve as $R$ values.  Five primes of each size are found, and the size difference between adjacent groups of five is a multiple around 1.25.  The list has
a total of around 50 primes to cover the scope of our computation.

In practice we mostly used $m=2\cdot 3\cdot 5$ with a small possible additional factor to make $M$ fit as we described.

\item
  Having bit values of 1 mean composite and 0 possibly prime is opposite
  what is normally done in practice, but this works better for us as you will see in the next comment.
\item 
  In addition to the sieving described above, for each of the primes $p<64$ we
  created a mask the same data type and length as $x$, where all bits of the mask
  are zero except at bit positions that are multiples of $p$, which are
  set to 1.  Then, to sieve by a small prime $p$, we first shift the mask to line up with the start point $n^2$, and then perform a bitwise or of the mask with $x$.  This should greatly speed sieving by small primes, since $x$ is packed 64 bits to a word.

  If we were to use a larger value for $t$ so that $x$ was much larger, the space used by the masks might use up cache and make the program slower.  Finding the right balance for our hardware took some trial and error.

  This optimization shows some improvement in practice, but should have
  no effect on the asymptotic running time.

\item 
  To perform the BLS prime test we must construct the value
    of $M(q+j)+1$, which fits in a 128-bit integer.  However, the test itself requires modular exponentiation.  We found that converting this 128-bit integer into the \texttt{mpz\_t} integer data type in GMP and using GMP's modular exponentiation function worked the best.  Our own hand-coded modular exponentiation routine, using Montgomery multiplication, was not as fast.
    Also, we always used the base 2 for the prime test; if that failed, rather than trying another base we simply looked for another $j$ to try.

    The sieving was quite successful in that the average number of failed prime tests per interval was only around 2.

  In practice, the first prime $R$ we used worked almost always.  About one time in $10^4$ the second $R$ value was used with Algorithm C.  It never went to the third $R$ value during our computation, and we never had to resort to Algorithm A.
  
\end{enumerate}

Aside from the Sage script to precompute the primes used for $R$ values, all our code was written in C++.  We used the GMP library only for prime testing as described above, and we use MPI to parallelize the code.
Parallelization was easy - we simply striped on large groups of segments.  We used groups of segments rather than individual segments so that the value of $t$ could be adjusted on each segment to maximally utilize the fixed-sized bit vector $x$.

See \url{https://github.com/sorenson64/olc} for our code and most of our data.

\section*{Acknowledgements}

We thank Frank Levinson for supporting Butler University's computing infrastructure, and both authors were supported in part by grants from the Butler Holcomb Awards Committee.

\nocite{GnuMP}

\bibliographystyle{amsplain}

\begin{thebibliography}{10}

\bibitem{AKS04}
Manindra Agrawal, Neeraj Kayal, and Nitin Saxena, \emph{P{RIMES} is in {P}},
  Ann. of Math. (2) \textbf{160} (2004), no.~2, 781--793. \MR{MR2123939
  (2006a:11170)}

\bibitem{apostol2}
T.M. Apostol, \emph{Introduction to analytic number theory}, Undergraduate
  Texts in Mathematics, Springer New York, 1998.

\bibitem{AB2004}
A.~O.~L. Atkin and D.~J. Bernstein, \emph{Prime sieves using binary quadratic
  forms}, Mathematics of Computation \textbf{73} (2004), 1023--1030.

\bibitem{Bernstein2004}
Daniel~J. Bernstein, \emph{Proving primality in essentially quartic random
  time}, Math. Comp. \textbf{76} (2007), no.~257, 389--403. \MR{2261028}

\bibitem{CP}
R.~Crandall and C.~Pomerance, \emph{Prime numbers, a computational
  perspective}, Springer, 2001.

\bibitem{SHP2014}
Tom{\'a}s~Oliveira e~Silva, Siegfried Herzog, and Silvio Pardi, \emph{Empirical
  verification of the even {Goldbach} conjecture and computation of prime gaps
  up to $4\cdot 10^{18}$}, Mathematics of Computation \textbf{83} (2014),
  no.~288, 2033--2060.

\bibitem{FGKT2016}
Kevin Ford, Ben Green, Sergei Konyagin, and Terence Tao, \emph{Large gaps
  between consecutive prime numbers}, Ann. of Math. (2) \textbf{183} (2016),
  no.~3, 935--974. \MR{3488740}

\bibitem{Galway2000}
William~F. Galway, \emph{Dissecting a sieve to cut its need for space},
  Algorithmic number theory (Leiden, 2000), Lecture Notes in Comput. Sci., vol.
  1838, Springer, Berlin, 2000, pp.~297--312. \MR{2002g:11176}

\bibitem{GnuMP}
T.~Granlund, \emph{The {Gnu} multiple precision arithmetic library},
  http://gmplib.org/manual, 2023.

\bibitem{HW}
G.~H. Hardy and E.~M. Wright, \emph{An introduction to the theory of numbers},
  5th ed., Oxford University Press, 1979.

\bibitem{Helfgott2020}
Harald~Andr\'{e}s Helfgott, \emph{An improved sieve of {Eratosthenes}},
  Mathematics of Computation \textbf{89} (2020), 333--350.

\bibitem{LPW96}
R.~F. Lukes, C.~D. Patterson, and H.~C. Williams, \emph{Some results on
  pseudosquares}, Math. Comp. \textbf{65} (1996), no.~213, 361--372, S25--S27.
  \MR{MR1322892 (96e:11010)}

\bibitem{Sorenson98}
Jonathan~P. Sorenson, \emph{Trading time for space in prime number sieves},
  Proceedings of the Third International Algorithmic Number Theory Symposium
  (ANTS III) (Portland, Oregon) (Joe Buhler, ed.), 1998, LNCS 1423,
  pp.~179--195.

\bibitem{Sorenson06}
\bysame, \emph{The pseudosquares prime sieve}, Proceedings of the 7th
  International Symposium on Algorithmic Number Theory (ANTS-VII) (Berlin,
  Germany) (Florian Hess, Sebastian Pauli, and Michael Pohst, eds.), Springer,
  July 2006, LNCS 4076, ISBN 3-540-36075-1, pp.~193--207.

\end{thebibliography}

\providecommand{\bysame}{\leavevmode\hbox to3em{\hrulefill}\thinspace}
\providecommand{\MR}{\relax\ifhmode\unskip\space\fi MR }
\providecommand{\MRhref}[2]{%
  \href{http://www.ams.org/mathscinet-getitem?mr=#1}{#2}
}
\providecommand{\href}[2]{#2}

\end{document}